\numberwithin{equation}{section}
\newtheorem{theorem}{Theorem}[section]
\newtheorem{proposition}[theorem]{Proposition}
\theoremstyle{remark}
\newtheorem{remark}{Remark}[section]
\theoremstyle{definition}
\newcommand{\bra}[1]{\langle #1 \rangle}
\def\XXint#1#2#3{{\setbox0=\hbox{$#1{#2#3}{\int}$ }
\vcenter{\hbox{$#2#3$ }}\kern-.58\wd0}}
\begin{document}
\title
[The cubic nonlinear Dirac equation]
{The cubic nonlinear Dirac equation}
\begin{abstract}
We present some results obtained in collaboration with prof. Piero D'Ancona concerning global existence for the 3D cubic non linear massless Dirac equation with a potential for small initial data in $H^1$ with slight additional assumptions. The new crucial tool is given by the proof of some refined endpoint Strichartz estimates.
\end{abstract}
\author{Federico Cacciafesta}










\maketitle


\section{Introduction}

We consider the 3D massless Dirac equation perturbed with a small potential $V$
\begin{equation}\label{pb}
\begin{cases}
iu_t+\mathcal{D}u+Vu=P_3(u) \qquad u:\mathbb{R}_t\times\mathbb{R}_x^3\rightarrow \mathbb{C}^4\\
u(0)=f(x)\in H^s,
\end{cases}
\end{equation}
where the \emph{Dirac}  operator is defined as 
$$
\mathcal{D}=1/i\displaystyle\sum_{k=1}^3\alpha_k\partial_k=-i(\alpha\cdot\nabla)
$$ 
with 
\begin{equation*}
\alpha_1=\left(\begin{array}{cccc}0 & 0 &0 & 1 \\0 & 0 &1 & 0\\
0 & 1 &0 & 0\\1 & 0 &0 & 0\end{array}\right),\quad
\alpha_2=\left(\begin{array}{cccc}0 & 0 &0 & -i \\0 & 0 &i & 0\\
0 & -i &0 & 0\\i & 0 &0 & 0\end{array}\right),\quad
\alpha_3=\left(\begin{array}{cccc}0 & 0 &1 & 0 \\0 & 0 &0 & -1\\
1 & 0 &0 & 0\\0 & -1 &0 & 0\end{array}\right),
\end{equation*}
the nonlinear term $P_3(u)\cong |u|^3$ and $V$ is a $4\times 4$ hermitian matrix. Notice that 
\begin{equation}\label{comm11}
\alpha_j\alpha_k+\alpha_k\alpha_j=2\delta_{jk}\bold{1},\quad j,k=1,2,3;
\end{equation}
The unperturbed nonlinear Dirac equation 
is important in relativistic quantum mechanics, and
was studied in a number of works 
(see e.g. \cite{Reed76-a},
\cite{DiasFigueira87-a},
\cite{Najman92-a},
\cite{Moreau89-a},
\cite{EscobedoVega97-a},
\cite{MachiharaNakamuraOzawa04-a},
\cite{MachiharaNakamuraNakanishi05-a}). 
In particular, it is well known that the cubic nonlinearity is \emph{critical}\footnote{The general framework when dealing with low regularity well posedness for nonlinear dispersive equations is to be thought as follows: the scaling of the operator, the functional setting and the structure of the nonlinearity yield a threshold regularity such that if the initial data has more regularity than this (i.e. is \emph{subcritical}) it is possible to prove global wellposedness with standard contraction arguments. If the initial data is instead \emph{supercritical} then we have various degrees of illposedness, while the \emph{critical} case is typically harder and many different things can happen.}
for solvability in the energy space $\dot{H}^{1}$;
global existence in $\dot{H}^{1}$
is still an open problem even for small initial data, while
the case of subcritical spaces $\dot{H}^{s}$, $s>1$
was settled in the positive in
\cite{EscobedoVega97-a},
\cite{MachiharaNakamuraOzawa04-a}. 
The standard tool when proving global wellposedness for general dispersive equations in a low regularity setting, at least in the subcritical case, relies on some space-time estimates (\emph{Strichartz estimates}) that allow the application of the classical fixed point method on some suitable functional space; the critical case may be corresponding to the endpoint estimate and thus may require a different approach. 

To begin with, let us give a look to the free homogeneous case.
Notice that property \eqref{comm11} yields $\mathcal{D}^2=-I_4\Delta$, so that
\begin{equation}\label{rap}
(i\partial_t-\mathcal{D})(i\partial_t+\mathcal{D})u=-\square u.
\end{equation}
This means that every solution to the free Dirac equation is a solution to a vectorial wave equation with suitable initial conditions. Despite the rich structure of $\mathcal{D}$ it is thus easy, relying on the well known theory for the free wave equation, to write the Dirac linear propagator as
\begin{equation}\label{dirsol}
e^{it\mathcal{D}}f=\cos(t|D|)f+i\frac{\sin(t|D|)}{|D|}\mathcal{D}f.
\end{equation}
 and the Strichartz estimates
\begin{equation}\label{str}
\| e^{it\mathcal{D}}f\|_{L^p_t\dot{H}_q^{\frac{1}{q}-\frac{1}{p}-\frac{1}{2}}}\lesssim\|f\|_{L^2}
\end{equation}
that hold for every couple $(p,q)$ such that
\begin{equation*}
\frac{2}{p}+\frac{2}{q}=1,\quad 2\leq p\leq\infty,\qquad\infty> q\geq2.
\end{equation*}
We recall that estimates \eqref{str} capture two different informations:
\\\\ \emph{Locally in time} they describe a type of smoothing effect, reflected in a gain of integrability with respect to some suitable norm.
 \\\\ \emph{Globally in time} they describe a decay effect, meaning that some spacial norm of a solution must decay to zero as $t\rightarrow\infty$, at least in some $L^p$ time-averaged sense.
\\\\
\indent The choice of the couple $(p,q)=(2,\infty)$ yields the so called \emph{endpoint estimate}
\begin{equation}\label{strend}
\| e^{it\mathcal{D}}f\|_{L^2_tL^\infty_x}\lesssim\|f\|_{\dot{H}^1}
\end{equation}
that is known to fail, as the corresponding one for the 3D wave flow (see \cite{klai-mach}). If \eqref{strend} were true one could prove global wellposedness for problem \eqref{pb} (at least for $V=0$) with standard methods. Considering the solution map
\begin{equation*}
  v\mapsto \Phi(v)=e^{it \mathcal{D}}f
  +i\int_{0}^{t}e^{i(t-t')\mathcal{D}}P_{3}(v(t'))dt'
\end{equation*}
one would have indeed
\begin{equation*}
  \left\|
    \int_{0}^{t}e^{i(t-t')\mathcal{D}}v(t')^{3}dt
  \right\|_{L^{2}L^{\infty}}
  \lesssim
  \int_{-\infty}^{+\infty}
  \|e^{it \mathcal{D}}e^{-it' \mathcal{D}}P_{3}(v(t'))\|
  _{L^{2}L^{\infty}}dt'
  \lesssim
  \|v^{3}\|_{L^{1}H^{1}}
\end{equation*}
that in conjuction with the conservation of $H^{1}$ norm would imply
\begin{equation*}
  \|\Phi(v)\|_{L^{\infty}_{t}H^{1}_{x}}+
     \|\Phi(v)\|_{L^2_{t}L^{\infty}_{x}}\lesssim
  \|f\|_{H^{1}}+\|v\|_{L^{\infty}H^{1}}\|v\|_{L^{2}L^{\infty}}^{2}.
\end{equation*}
In other words, a contraction argument in the norm
$\|\cdot\|_{L^{2}L^{\infty}}+\|\cdot\|_{L^{\infty}H^{1}}$ would be
enough to prove global existence of small $H^{1}$ solutions to
\eqref{pb}.

The failure of estimate \eqref{strend} prevents us from the application of this machinery. Our aim here will thus be to provide some "weak endpoint" estimates, i.e. some suitable refinements of estimate \eqref{strend} in view of obtaining global wellposedness for problem \eqref{pb} in the critical case with some slight additional hypothesis on the initial data. 

We shall here present two approaches to the problem, that in both cases will prove, at least, global existence for problem \eqref{pb} for \emph{radial} $H^1$ initial data.

\section{An "algebraic" approach}

In \cite{klai-mach} the authors noticed that the endpoint estimate for the 3D wave flow can be recovered if the initial data are assumed to be radial. As it can be easily seen indeed, for a radial $f$ we have, for all $x$,
\begin{equation*}
\frac{\sin(t|D|)}{|D|}f=\frac c{|x|}\int^{|x|+t}_{||x|-t|}sf(s)ds\lesssim M(g)(t)
\end{equation*}
where $M(g)$ is the Hardy maximal function of the function $g(s)=sf(s)$, so that
\begin{equation*}
\left\|\frac{\sin(t|D|)}{|D|}\right\|_{L^\infty_x}\lesssim M(g)(t)
\end{equation*}
and thus by a standard maximal estimate
\begin{equation*}
\left\|\frac{\sin(t|D|)}{|D|}\right\|_{L^2_tL^\infty_x}\lesssim \|g\|_{L^2(\mathbb{R}}=\|sf(s)\|_{L^2_s(\mathbb{R}}\cong\|f\|_{L^2(\mathbb{R}^3)}
\end{equation*}
(analogous arguments hold in dimension $n\geq 3$, see \cite{FangWang06-a}). 
This remark suggests the chance of slightly improving the range of admissible exponents when requiring some additional structure on the initial data. Even if the Dirac operator does not preserve the radiality of functions, we are indeed able to prove the following

\begin{proposition}\label{prop1}
Let $n=3$ ad let $f$ belong to the space 
\begin{equation}\label{H1}
\mathcal{\dot{H}}^1=\{f_1+\mathcal{D}f_2,\;f_1\in\dot{H}^1(\mathbb{R}^{3}),
\;f_2\in\dot{H}^2(\mathbb{R}^{3}),
\;f_1,f_2\;
\rm{radial}\}.
\end{equation}
Then the following endpoint Strichartz estimate holds:
\begin{equation}\label{homdir}
\| e^{it\mathcal{D}}f\|_{L^2_tL^\infty_x}\lesssim
\| f\|_{\dot H^1}.
\end{equation}
\end{proposition}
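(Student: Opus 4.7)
The plan is to decompose $f = f_1 + \mathcal{D}f_2$, insert into the explicit formula \eqref{dirsol}, commute $\mathcal{D}$ through the scalar multipliers $\cos(t|D|)$ and $\sin(t|D|)/|D|$, and use $\mathcal{D}^2 f_2 = |D|^2 f_2$ to rewrite
\begin{equation*}
 e^{it\mathcal{D}}f \;=\; A(t)\;+\;\mathcal{D}\,B(t),
\end{equation*}
with
$A(t) = \cos(t|D|)f_1 + i\sin(t|D|)|D|f_2$ and
$B(t) = \cos(t|D|)f_2 + i\,\sin(t|D|)|D|^{-1}f_1$,
both componentwise radial at every $t$ because the scalar multipliers preserve radiality. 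Since the $\ell = 0$ summand $f_1$ and the $\ell = 1$ summand $\mathcal{D}f_2$ are $\dot{H}^1$-orthogonal (by spherical harmonic orthogonality on each sphere), one has the identity $\|f\|_{\dot{H}^1}^2 = \|f_1\|_{\dot{H}^1}^2 + \|f_2\|_{\dot{H}^2}^2$, so it is enough to bound each of $\|A\|_{L^2_tL^\infty_x}$ and $\|\mathcal{D}B\|_{L^2_tL^\infty_x}$ by this quantity.

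For $A$, one writes $\sin(t|D|)|D|f_2 = (\sin(t|D|)/|D|)(-\Delta f_2)$ so that the data $-\Delta f_2$ is radial in $L^2$, and then the scalar radial endpoint Strichartz estimate for the 3D wave equation recalled in the introduction yields $\|A\|_{L^2_tL^\infty_x} \lesssim \|f_1\|_{\dot{H}^1} + \|f_2\|_{\dot{H}^2}$ immediately.

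For $\mathcal{D}B$ the main tool is a pointwise identity. Writing $\partial_k B = B'(r)\,x_k/r$ componentwise one finds $\mathcal{D}B(x) = -i(\alpha\cdot\hat x)B'(|x|)$, and the anticommutation relations \eqref{comm11} give $(\alpha\cdot\hat x)^2 = I_4$, so $\alpha\cdot\hat x$ is a unitary matrix on $\mathbb{C}^4$ and consequently $|\mathcal{D}B(t,x)|_{\mathbb{C}^4} = |B'(t,|x|)|_{\mathbb{C}^4}$. The problem reduces to proving the scalar inequality $\|\partial_r B\|_{L^2_tL^\infty_r} \lesssim \|f_1\|_{\dot{H}^1} + \|f_2\|_{\dot{H}^2}$, and here I would differentiate the Kirchhoff-type radial representations
$\cos(t|D|)f_2(r) = (2r)^{-1}[(r+t)f_2(r+t) + (r-t)f_2(|r-t|)]$ and
$(\sin(t|D|)/|D|)f_1(r) = (2r)^{-1}\int_{|r-t|}^{r+t} s f_1(s)\,ds$,
then exploit the fact that $rB(t,r)$ solves the one-dimensional wave equation so that the apparent $1/r^2$ singularities in $\partial_r B$ cancel pairwise, and finally majorize the surviving ``difference quotients along the light cones $r\pm t$'' pointwise by one-dimensional Hardy--Littlewood maximal functions of suitable auxiliary $1$D quantities built from $f_1$ and $f_2$. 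The maximal inequality in the $t$ variable together with the standard radial Parseval identity $\|g\|_{\dot{H}^s(\mathbb{R}^3)}^2 \simeq \int_0^\infty|\partial_r^s(rg(r))|^2\,dr$ for radial $g$ then convert everything into the desired bound. The main obstacle lies precisely in this last computation: one has to group the several terms produced by $\partial_r B$ carefully so that the $1/r^2$ singularities display their cancellation, and identify what remains as bona fide maximal functions of $L^2(\mathbb{R}_+,ds)$ quantities whose norms are controlled by $\|f_1\|_{\dot{H}^1}$ and $\|f_2\|_{\dot{H}^2}$.
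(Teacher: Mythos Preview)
Your reduction $e^{it\mathcal{D}}f=A+\mathcal{D}B$ with $A,B$ radial is correct, the orthogonality
$\|f\|_{\dot H^1}^2=\|f_1\|_{\dot H^1}^2+\|f_2\|_{\dot H^2}^2$ holds for the reason you give,
and the pointwise identity $|\mathcal{D}B|=|B'|$ via $(\alpha\cdot\hat x)^2=I_4$ is the right observation.
The computation you flag as the ``main obstacle'' does go through: writing $h(s)=sf_2(s)$, $k(s)=sf_1(s)$
(odd extensions) one finds, after the cancellation you anticipate, that both the real and imaginary parts
of $\partial_r B$ take the common form
\[
\frac{1}{2r}\Bigl[\phi(t+r)+\phi(t-r)-\frac1r\int_{t-r}^{t+r}\phi(s)\,ds\Bigr]
=\frac{1}{2r^2}\int_{t}^{t+r}(u-t)\phi'(u)\,du-\frac{1}{2r^2}\int_{t-r}^{t}(t-u)\phi'(u)\,du
\]
with $\phi=h'$ and $\phi=k$ respectively, and the right-hand side is pointwise bounded by
$M\phi'(t)$. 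Since $\|h''\|_{L^2(\mathbb{R})}\simeq\|f_2\|_{\dot H^2(\mathbb{R}^3)}$ and
$\|k'\|_{L^2(\mathbb{R})}\simeq\|f_1\|_{\dot H^1(\mathbb{R}^3)}$, the maximal inequality closes the estimate.

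This is a genuinely different route from the one the paper indicates. The paper does not reproduce its
proof here but states that it ``mainly relies on Fourier transform in radial coordinates'' (Hankel-type
analysis, as in \cite{MachiharaNakamuraNakanishi05-a} and \cite{Cacciafesta1}), and stresses that this
Fourier-side argument has the advantage of extending with minor modifications to the inhomogeneous
estimate of Proposition~\ref{prop2}. Your physical-space argument via Kirchhoff's formula and the
Hardy--Littlewood maximal function is more elementary and self-contained for Proposition~\ref{prop1}
alone, but it is less obvious how to adapt it to the Duhamel term with the specific matrix structure
\eqref{Fform}, which is the step the paper actually needs for Theorem~\ref{teo1}.
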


Our proof of this result, already contained in \cite{MachiharaNakamuraNakanishi05-a} and mainly relying on Fourier transform in radial coordinates, takes the remarkable advantage of being adaptable also to handle the non homogenous term. With minor modifications we are in fact able to prove the following mixed Strichartz-smoothing estimate

\begin{proposition}\label{prop2}
Let $n=3$ and assume $F(t,x)$ has the structure
\begin{equation}\label{Fform}
F(t,x)=F_1(|x|)\mathbb{I}_4+i(\alpha\cdot\hat{x})F_2(|x|).
\end{equation}
Then the following estimate holds
\begin{equation}\label{stdir}
\displaystyle\left\|\int_0^t e^{i(t-s)\mathcal{D}}F(s)\:ds\right\|_{L^2_tL^\infty_x}\lesssim
\| \langle x\rangle^{\frac{1}{2}+}|D| F\|_{L^2_tL^2_x}.
\end{equation}
\end{proposition}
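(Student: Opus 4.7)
The plan is to adapt the argument of Proposition~\ref{prop1} so as to accommodate the Duhamel integral directly, exploiting the fact that the structural hypothesis \eqref{Fform} is precisely the condition that $F(s,\cdot)\in\mathcal{\dot{H}}^{1}$ for a.e.\ $s$. The naive route, applying Proposition~\ref{prop1} inside a Minkowski inequality in $s$, would produce a bound in terms of $\|F\|_{L^{1}_{t}\dot{H}^{1}_{x}}$, which is strictly stronger than the mixed-norm weighted $L^{2}_{t}L^{2}_{x}$ on the right-hand side of \eqref{stdir}; the $L^{2}_{t}$ dependence can only be recovered by keeping the $s$-integration inside the proof throughout.

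The first step is to use \eqref{dirsol} to split the Dirac propagator into the half-wave pieces $\cos((t-s)|D|)$ and $\frac{\sin((t-s)|D|)}{|D|}\mathcal{D}$. Thanks to the anticommutation relations \eqref{comm11}, a short computation shows that $\mathcal{D}F(s,\cdot)$ again belongs to the ansatz \eqref{Fform}, with the radial and $i(\alpha\cdot\hat{x})$-radial components interchanged (up to a $2/r$ term). One can then apply the explicit Poisson-type identity
\begin{equation*}
\frac{\sin(\tau|D|)}{|D|}g(x)=\frac{c}{|x|}\int_{\bigl||x|-\tau\bigr|}^{|x|+\tau}\rho\,g(\rho)\,d\rho
\end{equation*}
recalled above (together with its derivative analogue for $\cos(\tau|D|)$) to each radial piece, reducing the whole Duhamel integral to a one-dimensional integral operator in the variables $(t,s,r)$ with $r=|x|$.

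The second step is to dominate the $L^{\infty}_{x}$ norm at fixed $t$ by a Hardy-Littlewood maximal function of a suitable profile $\rho\,|D|F(s,\rho)$, exactly as in Proposition~\ref{prop1}; what remains is an $L^{1}_{s}$ integration on $[0,t]$. Inserting the factor $\langle s\rangle^{-1/2-\epsilon}\cdot\langle s\rangle^{1/2+\epsilon}$ and applying Cauchy--Schwarz produces an $L^{2}_{s}$ norm weighted by $\langle s\rangle^{1/2+\epsilon}$. Under the characteristic change of variables built into the Poisson formula, this $s$-weight transfers to a spatial weight $\langle x\rangle^{1/2+\epsilon}$ on the profile $|D|F$. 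A final application of the Fefferman--Stein / Hardy-Littlewood maximal inequality in $L^{2}$ then delivers the claimed $L^{2}_{t}L^{\infty}_{x}$ bound.

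The main obstacle is that the estimate is simultaneously at the endpoint of Strichartz and at the endpoint of Kato smoothing: the power $1/2$ in the weight cannot be reached, parallel to the failure of \eqref{strend} for general data. Technically this manifests itself in the need to insert an arbitrarily small $\epsilon>0$ at the Cauchy--Schwarz step. Furthermore, the usual Christ--Kiselev reduction of $\int_{0}^{t}(\cdot)\,ds$ to $\int_{-\infty}^{+\infty}(\cdot)\,ds$ is unavailable at the diagonal exponent $p=q=2$, so the truncated integral has to be handled directly in the explicit integral representation rather than factored through a homogeneous-type estimate.
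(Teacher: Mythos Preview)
The paper does not spell out a proof of Proposition~\ref{prop2} but indicates clearly that both Propositions~\ref{prop1} and~\ref{prop2} are obtained via the \emph{Fourier transform in radial coordinates} (Hankel transform), following \cite{MachiharaNakamuraNakanishi05-a} and \cite{Cacciafesta1}; the physical-space maximal function computation recalled in the introduction is presented only as motivation. Your plan, based on the Poisson kernel and the Hardy--Littlewood maximal function, is therefore already a different route.

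More importantly, there is a genuine gap at the decisive step. After bounding the $L^{\infty}_{x}$-norm by $\int_{0}^{t}M\bigl(\rho\,|D|F(s,\rho)\bigr)(t-s)\,ds$, applying Cauchy--Schwarz with the factor $\langle s\rangle^{-1/2-\epsilon}\langle s\rangle^{1/2+\epsilon}$, and then the maximal inequality in the remaining variable, what one actually obtains is
\[
\left\|\int_{0}^{t}e^{i(t-s)\mathcal{D}}F(s)\,ds\right\|_{L^{2}_{t}L^{\infty}_{x}}
\ \lesssim\ \bigl\|\langle t\rangle^{\frac12+\epsilon}\,|D|F\bigr\|_{L^{2}_{t}L^{2}_{x}},
\]
that is, a \emph{time}-weighted right-hand side. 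Your sentence ``under the characteristic change of variables built into the Poisson formula, this $s$-weight transfers to a spatial weight $\langle x\rangle^{1/2+\epsilon}$'' is the unjustified step: the characteristics in the Poisson representation link the spatial radius $\rho$ to the time \emph{difference} $t-s$, not to $s$. Taking $F$ supported near $|x|\sim1$ but at late times $s\sim N$ shows at once that $\|\langle t\rangle^{1/2+}|D|F\|_{L^{2}L^{2}}$ and $\|\langle x\rangle^{1/2+}|D|F\|_{L^{2}L^{2}}$ are incomparable, so no such transfer is possible in general. Since the application in Theorem~\ref{teo1} pairs \eqref{stdir} with the \emph{space}-weighted Kato smoothing estimate, a time-weighted inequality would be useless there.

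If you want to rescue a physical-space argument, the weight $\langle\rho\rangle^{\pm(1/2+\epsilon)}$ must be inserted inside the $\rho$-integral of the Poisson formula \emph{before} passing to any maximal bound, and one then has to control the resulting integral over the characteristic triangle uniformly in $r=|x|$; the small-$r$ region is exactly where the naive Cauchy--Schwarz fails, and your sketch does not address this. The Fourier-side proof in \cite{Cacciafesta1} avoids this difficulty altogether.
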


This result is crucial in view of obtaining endpoint Strichartz estimates for the potential-perturbed Dirac flow, and to keep the loss of derivatives less or equal than 1. Propositions \ref{prop1} and \ref{prop2} yield in fact the following Theorem, a proof of which will be also sketched.

\begin{theorem}[Endpoint estimate for the perturbed Dirac flow]\label{teo1}
Let $V(x)$ be a $4\times 4$ matrix of the form 
\begin{equation}\label{pppot}
V(x)=V_1(|x|)\mathbb{I}_4+i\beta(\alpha\cdot\hat{x})V_2(|x|),\qquad
V_1,V_2:\mathbb{R}^{+}\to \mathbb{R}.
\end{equation}
Assume that for some $\sigma>1$ and some sufficiently small
$\delta>0$
\begin{equation}\label{Vhp}
  |V(x)|\le \frac{\delta}
  {\langle x\rangle^{1/2+}w_\sigma^{1/2}},\quad 
   |\nabla V(x)|\le \frac{\delta}
  {\langle x\rangle^{1/2+}w_\sigma^{1/2}},
\end{equation}
with $w_\sigma(x)^{1/2}=|x|(1+|\log|x||)^\sigma$.\\
Then the following endpoint Strichartz estimate
\begin{equation}\label{endV}
\| e^{it(\mathcal{D}+V)}f\|_{L^2_t L^\infty_x}\lesssim
\| f\|_{ H^1}
\end{equation}
holds for all initial data $f\in\mathcal{\dot H}^1\cap H^1$.
\end{theorem}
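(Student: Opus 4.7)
The strategy is to treat the potential $V$ perturbatively by Duhamel, combining the homogeneous endpoint of Proposition \ref{prop1} with the mixed smoothing--Strichartz estimate of Proposition \ref{prop2}. Setting $u(t):=e^{it(\mathcal{D}+V)}f$, the representation
\begin{equation*}
u(t)=e^{it\mathcal{D}}f-i\int_0^t e^{i(t-s)\mathcal{D}}\,V u(s)\,ds
\end{equation*}
combined with the hypothesis $f\in\mathcal{\dot H}^1$ reduces the free piece immediately, via Proposition \ref{prop1}, to $\|e^{it\mathcal{D}}f\|_{L^2_tL^\infty_x}\lesssim\|f\|_{\dot H^1}$.

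The algebraic key is that the two-component ansatz \eqref{H1} is designed precisely to be compatible with the shape \eqref{pppot} of $V$: multiplication of a radial field by $V_1(|x|)\mathbb{I}_4$ stays radial, while $\mathcal{D}$ applied to a radial field produces a factor of $i(\alpha\cdot\hat{x})$ times a radial function, matching \eqref{Fform}; one then checks that the class $\mathcal{\dot H}^1$ is preserved by the free Dirac flow, the two pieces being intertwined through $\mathcal{D}^2=-\Delta$. Granting this structural preservation along the iterative scheme, $V u(s)$ takes the form \eqref{Fform} and Proposition \ref{prop2} applies to yield
\begin{equation*}
\Bigl\|\int_0^t e^{i(t-s)\mathcal{D}}V u(s)\,ds\Bigr\|_{L^2_tL^\infty_x}\lesssim\|\langle x\rangle^{1/2+}|D|(Vu)\|_{L^2_tL^2_x}.
\end{equation*}
Distributing the derivative via $|D|(Vu)\sim(\nabla V)u+V(|D|u)$ and invoking the pointwise bounds \eqref{Vhp}, the right hand side is dominated by $\delta\bigl(\|w_\sigma^{-1/2}u\|_{L^2_tL^2_x}+\|w_\sigma^{-1/2}|D|u\|_{L^2_tL^2_x}\bigr)$.

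To close the bootstrap I would work in the space $X$ endowed with
\begin{equation*}
\|u\|_X:=\|u\|_{L^2_tL^\infty_x}+\|u\|_{L^\infty_t H^1_x}+\|w_\sigma^{-1/2}\langle D\rangle u\|_{L^2_tL^2_x},
\end{equation*}
where the last ingredient is a weighted Kato-type smoothing norm available for the free Dirac flow at the $H^1$ level (the logarithmic reinforcement $w_\sigma$ with $\sigma>1$ being exactly what is needed to absorb the $\langle x\rangle^{1/2+}$ loss at the origin). The preceding bounds then show that the Duhamel operator $Tu:=-i\int_0^t e^{i(t-s)\mathcal{D}}V u\,ds$ obeys $\|Tu\|_X\le C\delta\|u\|_X$; the smallness of $\delta$ permits inversion of $I-T$ via Neumann series and produces $\|u\|_X\lesssim\|f\|_{H^1}$, whence \eqref{endV}. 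The main obstacle, in my view, is the structural preservation step: verifying that the matrix-valued non-commutativity between $V$, $\beta$, $\alpha\cdot\hat{x}$ and $\mathcal{D}$ does not destroy the ansatz \eqref{Fform} under iteration, and establishing the weighted smoothing estimate with precisely the logarithmic weight appearing in \eqref{Vhp}.
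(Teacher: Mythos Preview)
Your proposal follows the same skeleton as the paper --- Duhamel, Proposition~\ref{prop1} on the free piece, Proposition~\ref{prop2} on the integral term, then Leibniz for $|D|(Vu)$ --- but diverges in how the argument is closed. You set up a contraction in a three-component norm $X$ that carries the weighted smoothing norm, rely only on estimates for the \emph{free} Dirac flow, and finish by inverting $I-T$ via Neumann series for $\delta$ small. The paper instead invokes directly the smoothing estimates for the \emph{perturbed} flow $e^{it(\mathcal{D}+V)}$, quoted from D'Ancona--Fanelli, namely
\[
\|w_\sigma^{-1/2}u\|_{L^2_tL^2_x}\lesssim\|f\|_{L^2},\qquad
\|w_\sigma^{-1/2}\nabla u\|_{L^2_tL^2_x}\lesssim\|f\|_{\dot H^1};
\]
plugging these in after the Leibniz step ends the proof in one line, with no iteration at all. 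This shortcut also dissolves your main declared obstacle: since there is no Neumann expansion, one never has to verify that the ansatz \eqref{Fform} propagates through successive applications of $T$ --- Proposition~\ref{prop2} is applied exactly once, to $Vu$ with $u$ the full perturbed evolution. Your route is more self-contained (it does not import an external smoothing result for the perturbed operator), but pays for this with the bootstrap machinery and the structural-preservation verification; the paper's route is shorter but leans on an off-the-shelf estimate.
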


\begin{proof}
By Duhamel's formula we write the integral form
\begin{equation*}
  u=e^{it(\mathcal{D}+V)}f=
  e^{it \mathcal{D}}f+
  i\int_{0}^{t}e^{i(t-s)\mathcal{D}}(Vu)ds
\end{equation*}
so that we have
\begin{equation*}
\|u \|_{L^2_tL^\infty_x}=
\|e^{it(\mathcal{D}+V)}f \|_{L^2_tL^\infty_x}
\end{equation*}
$$
\leq\left\|e^{it\mathcal{D}}f\right\|_{L^2_tL^\infty_x}+
\left\|\int_0^t 
e^{i(t-s)\mathcal{D}}
(V(s)e^{is(\mathcal{D}+V)}f)ds\right\|_{L^2_tL^\infty_x}$$
Applying Propositions \ref{prop1} and \ref{prop2} we can continue estimating with
$$
\lesssim\|f\|_{\dot H^1}+\|\langle x\rangle^{\frac{1}{2}+}|D|(Vu)\|_{L^2_tL^2_x}.
$$
From the $L^2$ boundedness of the Riesz operator $|D|^{-1}\nabla$ we have
$$
\lesssim \|f\|_{\dot H^1}+\|\langle x\rangle^{\frac{1}{2}+}(\nabla V)u\|_{L^2_tL^2_x}
+\|\langle x\rangle^{\frac{1}{2}+}V(\nabla u)\|_{L^2_tL^2_x}.
$$
To conclude we need the following smoothing estimates (the first of which is proved in D'Ancona-Fanelli '08, the second directly follows), which hold with more general assumptions on the potential $V$:
\begin{itemize}
\item
$\|w_\sigma^{-1/2}e^{it(\mathcal{D}+V)}f\|_{L^2_tL^2_x}\lesssim\|f\|_{L^2}$,
\item
$\|w_\sigma^{-1/2}\nabla e^{it(\mathcal{D}+V)}f\|_{L^2_tL^2_x}\lesssim\|f\|_{\dot H^1}.
$
\end{itemize}
Thus multiplying and dividing by 
$$
w_\sigma(x)^{1/2}=|x|(1+|\log|x||)^\sigma
$$ 
in the smoothing terms yields
\begin{itemize}
\item
$
\|\langle x\rangle^{\frac{1}{2}+}(\nabla V)u\|_{L^2_tL^2_x}\lesssim
\|\langle x\rangle^{1/2+}w_\sigma^{1/2} \nabla V\|_{L^\infty}\cdot\| w_\sigma^{-1/2} u\|_{L^2_tL^2_x}
$
$$
\lesssim  \|f\|_{L^2},
$$
\item
$
\|\langle x\rangle^{\frac{1}{2}+}V(\nabla u)\|_{L^2_tL^2_x}\lesssim
\|\langle x\rangle^{1/2+}w_\sigma^{1/2} V\|_{L^\infty}\cdot\| w_\sigma^{-1/2} \nabla u\|_{L^2_tL^2_x}
$
$$
\lesssim  \|f\|_{\dot H^1}
$$
\end{itemize}
and this concludes the proof.
\end{proof}

\begin{remark}
Notice that hypothesis \eqref{Vhp}, that is needed in view of applying proposition \ref{prop2}, is fairly natural: the structure required describes in fact electric potentials and particles with anomalous magnetic momentum.
\end{remark}
It would be now tempting to apply this result to prove global existence for problem \eqref{pb} for small initial data in $\mathcal{\dot H}^1\cap H^1$  with standard fixed-point techniques; 
since such a method is iterative, what we would need is that class to be invariant under the action of the cubic non linearity $P_3(u)$, but this fact is unfortunately not true even with the simplest choice $P_3(u)=\langle \beta u,u\rangle u$.

Our next step will thus be to show the existence of some proper subspace of the set $\mathcal{\dot H}^1\cap H^1$ on which the following conditions are satisfied:
\begin{itemize}
\item 
The operator $\mathcal{D}+V$ is well-defined and selfadjoint,
\item
The action of $P_3(u)$ is invariant.  
\end{itemize}

The classical theory of \emph{partial wave subspaces} (see \cite{thaller}) allows in fact, analyzing the structure of the Dirac operator in radial coordinates and using spherical harmonics, to decompose the space $L^2(\mathbb{R}^3)^4$ in a direct sum of Hilbert spaces that are left invariant under the action of the potential perturbed Dirac operator. Moreover, an explicit insight of these spaces shows that "some of them" are left invariant by the action of the standard cubic non linearity as well. We collect all this results in the following proposition, referring to \cite{thaller} and \cite{Cacciafesta1} for notation, details of the statement and a proof.

\begin{proposition}\label{dir+v}
There exists a family of 2-dimensional Hilbert spaces $\mathcal{H}_{m_j,k_j}$ (the "partial wave subspaces") such that
\begin{equation*}
L^2(\mathbb{R}^3)^4\cong \bigoplus L^2((0,\infty),dr)\otimes \mathcal{H}_{m_j,k_j}
\end{equation*}
and the operator $\mathcal{D}+V$, with $V$ of the form \eqref{pppot} acts and is self-adjoint on such spaces.

Moreover for $j=1/2$ the spaces $\mathcal{H}_{m_{1/2},k_{1/2}}$ are left invariant also by the action of the cubic nonlinearity $P_3(u)=\langle \beta u,u\rangle u$.
\end{proposition}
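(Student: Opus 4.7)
The plan is to carry out the classical partial wave decomposition of $L^2(\mathbb{R}^3)^4$ adapted to the structure of $V$, following the exposition in Thaller's monograph. I would first introduce the total angular momentum operators $J_k = L_k + S_k$ built from the orbital angular momentum $L_k=-i(x\times\nabla)_k$ and the spin $S_k=\frac12\Sigma_k$, together with the Dirac spin-orbit operator $K=\beta(2S\cdot L+\mathbb{I})$. Using the anticommutation relations \eqref{comm11} one checks by direct computation that $\mathcal{D}$ commutes with $J^2$, $J_3$ and $K$, so that these operators admit a simultaneous spectral resolution.

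Next I would recall the spinor spherical harmonics $\Omega_{m_j,\pm k_j}\in L^2(S^2)^2$, the joint eigenvectors of $J^2$, $J_3$ and $K$ with the standard quantum numbers, and define each partial wave subspace as the two-dimensional space
\begin{equation*}
\mathcal{H}_{m_j,k_j}=\mathrm{span}\left\{\begin{pmatrix}\Omega_{m_j,k_j}\\ 0\end{pmatrix},\ \begin{pmatrix}0\\ \Omega_{m_j,-k_j}\end{pmatrix}\right\}\subset L^2(S^2)^4.
\end{equation*}
The completeness of $\{\Omega_{m_j,\pm k_j}\}$ in $L^2(S^2)^2$, combined with the identification $L^2(\mathbb{R}^3)\cong L^2((0,\infty),r^2\,dr)\otimes L^2(S^2)$ and the usual absorption of a factor $1/r$ into the radial part, then yields the claimed direct sum decomposition. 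Reducibility of $\mathcal{D}+V$ follows piece by piece: the free operator $\mathcal{D}$ restricts on each factor to an explicit $2\times 2$ radial matrix operator depending only on $k_j$; the scalar part $V_1(|x|)\mathbb{I}_4$ is trivially reducible; and the tensorial piece $i\beta(\alpha\cdot\hat x)$ is by a classical identity precisely the operator that exchanges $\Omega_{m_j,k_j}\leftrightarrow\Omega_{m_j,-k_j}$, hence it too preserves $\mathcal{H}_{m_j,k_j}$. Self-adjointness then reduces to that of the resulting $2\times 2$ radial Dirac operator with potential, which follows from a standard limit point / Kato--Rellich argument under the mild regularity required of $V_1,V_2$.

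It remains to verify the invariance of $\mathcal{H}_{m_{1/2},k_{1/2}}$ under $P_3(u)=\langle\beta u,u\rangle u$. Any $u$ in the partial wave subspace may be written as
\begin{equation*}
u(x)=\frac{g(r)}{r}\begin{pmatrix}\Omega_{m_{1/2},k_{1/2}}\\ 0\end{pmatrix}+\frac{if(r)}{r}\begin{pmatrix}0\\ \Omega_{m_{1/2},-k_{1/2}}\end{pmatrix},
\end{equation*}
and since $\beta=\mathrm{diag}(\mathbb{I}_2,-\mathbb{I}_2)$ the pointwise scalar factor appearing in $P_3(u)$ expands as
\begin{equation*}
\langle\beta u,u\rangle=\frac{|g(r)|^2}{r^2}\bigl|\Omega_{m_{1/2},k_{1/2}}(\omega)\bigr|^2-\frac{|f(r)|^2}{r^2}\bigl|\Omega_{m_{1/2},-k_{1/2}}(\omega)\bigr|^2.
\end{equation*}
A short computation using the explicit formulas for $Y_0^0$ and $Y_1^m$ shows that, at $j=1/2$, both squared moduli are identically equal to the constant $1/(4\pi)$; multiplication of $u$ by this radial scalar then trivially preserves the subspace, yielding the desired invariance.

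The delicate point of the whole argument is precisely this last pointwise identity: the angular density $|\Omega_{m_j,\pm k_j}|^2$ is constant on $S^2$ only for $j=1/2$ and genuinely fails to be so at higher angular momenta, where products of three spinor harmonics of degree $l\geq 1$ generate angular components that leak outside the two-dimensional space $\mathcal{H}_{m_j,k_j}$. This algebraic obstruction is the reason the invariance statement is restricted to the $j=1/2$ sector, and it is the key step on which the whole scheme for global existence ultimately relies.
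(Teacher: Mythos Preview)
Your proposal is correct and follows essentially the same route as the paper. The paper does not give a self-contained proof of this proposition, referring instead to \cite{thaller} and \cite{Cacciafesta1}, but the content of the two remarks following the statement matches your argument exactly: the basis $\{\Phi^{\pm}_{m_j,k_j}\}$ in Remark~2.2 is precisely your pair $\bigl(\begin{smallmatrix}\Omega_{m_j,k_j}\\0\end{smallmatrix}\bigr),\bigl(\begin{smallmatrix}0\\\Omega_{m_j,-k_j}\end{smallmatrix}\bigr)$, and Remark~2.3 carries out explicitly for $(j,m_j,k_j)=(1/2,1/2,1)$ the very computation you indicate, obtaining $\langle\beta u,u\rangle=-\tfrac{1}{4\pi}\bigl(u^{+}(r)^{2}+u^{-}(r)^{2}\bigr)$ with no angular dependence, which is your statement that $|\Omega_{m_{1/2},\pm k_{1/2}}|^{2}\equiv 1/(4\pi)$.
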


\begin{remark}
Each of these partial wave subspaces has a basis $\{\Phi_{m_j,k_j}^+,\Phi_{m_j,k_j}^-\}$, that can be explicitly written using Legendre polynomials, satisfying the property that $$\mathcal{D}\Phi_{m_j,k_j}^{\pm}=\mp\Phi_{m_j,k_j}^\mp.$$
The action of the operator $\mathcal{D}+V$ can thus be easily represented in coordinates with respect to such basis, and in this contest is commonly referred to as the \emph{radial Dirac operator}.
\end{remark}

\begin{remark}
The second part of the statement is justified by the fact that for $j=1/2$ it is easy to evaluate the action of $P_3$ on the functions $\Phi_{m_j,k_j}^+$, $\Phi_{m_j,k_j}^-$, that in this particular case have a fairly simple structure. To give a clearer picture of the situation, consider a.e. the triple\footnote{Notice that for a fixed $j=\frac12, \frac32,...$ there will be $4j+2$ possible choices of $m_j$, $k_j$. More precisely the range is $m_j=-j,-j+1,\dots,+j$ and $k_j=-(j+1/2),+(j+1/2)$.} $(j,m_j,k_j)=(1/2,1/2,1)$; then we have
\begin{equation*}
\Phi^+_{1/2,1}=
\left(\begin{array}
{cc}
\displaystyle\frac{i}{2\sqrt\pi}\cos\theta\\
\displaystyle
\frac{i}{2\sqrt\pi}e^{i\phi}\sin\theta\\
0\\0
\end{array}\right)\qquad
\Phi^-_{1/2,1}=
\left(\begin{array}
{cc}
0\\0\\
\displaystyle\frac{1}{2\sqrt\pi}\\0
\end{array}\right).
\end{equation*}
As it is easily seen, this yields the fact that for a generic function $u\in L^2((0,\infty),dr)\otimes  \mathcal{H}_{1/2,1}$ that will thus be written as
\begin{equation*}
u(r,\theta,\phi)=u^+(r)\Phi^+_{1/2,1}(\theta,\phi)+u^-(r)\Phi^-_{1/2,1}(\theta,\phi)
\end{equation*}
for some radial functions $u^+$, $u^-$, the nonlinear term reads as
\begin{eqnarray*}
\langle\beta u,u\rangle&=&\displaystyle-\frac{1}{4\pi}\cos^2\theta\: u^+(r)^2
-\frac{1}{4\pi}\sin^2\theta\: u^+(r)^2-\frac{1}{4\pi}u^-(r)^2=\\
&=&\displaystyle -\frac{1}{4\pi}\left(u^+(r)^2+u^-(r)^2\right),
\end{eqnarray*}
and so has no angular component. This proves that the action of the standard cubic nonlinearity is invariant on the space 
$\mathcal{H}_{1/2,1}$.
\end{remark}

This last remark provides the final tool we needed to state our global existence result, that now can be proved with completely standard techniques.

\begin{theorem}\label{teo2}
Let $P_3(u)=\langle\beta u,u\rangle u$ and the potential $V$ satisfying \eqref{pppot}- \eqref{Vhp}. Then for every initial data $f\in\dot H^1((0,\infty),dr)\otimes  \mathcal{H}_{m_{1/2},k_{1/2}}$, with sufficiently small $\dot H^1$ norm, 
there exists a unique global solution $u(t,x)$ to problem 
(\ref{pb}) in the class 
$C_t(\mathbb{R},\dot H^1)\cap L^2_t(\mathbb{R},L^\infty).$
\end{theorem}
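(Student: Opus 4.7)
The plan is to run a Picard iteration for the Duhamel integral equation
\begin{equation*}
\Phi(v)(t) = e^{it(\mathcal{D}+V)}f + i\int_0^t e^{i(t-s)(\mathcal{D}+V)} P_3(v(s))\,ds
\end{equation*}
in the Banach space
\begin{equation*}
X = \bigl\{v\in C_t(\mathbb{R};H^1_x)\cap L^2_tL^\infty_x\,:\,v(t)\in L^2((0,\infty),dr)\otimes \mathcal{H}_{m_{1/2},k_{1/2}}\text{ for a.e. }t\bigr\},
\end{equation*}
equipped with the norm $\|v\|_X=\|v\|_{L^\infty_tH^1_x}+\|v\|_{L^2_tL^\infty_x}$, on a small closed ball around the origin.

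First I would verify that $\Phi$ maps $X$ into itself. By Proposition \ref{dir+v}, both the perturbed flow $e^{it(\mathcal{D}+V)}$ and the cubic form $P_3(u)=\langle\beta u,u\rangle u$ preserve the partial wave sector $L^2((0,\infty),dr)\otimes \mathcal{H}_{m_{1/2},k_{1/2}}$, so that every iterate remains in $X$ and, crucially, admits at each time a decomposition of the form \eqref{H1}. This is what makes Theorem \ref{teo1} applicable at every Picard step.

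The key quantitative estimate is
\begin{equation*}
\|\Phi(v)\|_X\lesssim \|f\|_{H^1}+\|v\|_{L^2_tL^\infty_x}^2\,\|v\|_{L^\infty_tH^1_x}.
\end{equation*}
For the $L^2_tL^\infty_x$ component, I would bound the linear term directly by Theorem \ref{teo1} and treat the Duhamel term via the $TT^{*}$-type argument already sketched in the introduction: writing $e^{i(t-s)(\mathcal{D}+V)} = e^{it(\mathcal{D}+V)}e^{-is(\mathcal{D}+V)}$, applying Minkowski in $s$, and invoking Theorem \ref{teo1} pointwise in $s$ (legitimate precisely because the partial wave invariance keeps $e^{-is(\mathcal{D}+V)}P_3(v(s))$ in the admissible class $\mathcal{\dot H}^1$), one reduces to
\begin{equation*}
\|P_3(v)\|_{L^1_tH^1_x}\lesssim \|v\|_{L^2_tL^\infty_x}^2\,\|v\|_{L^\infty_tH^1_x},
\end{equation*}
which is a standard H\"older/Leibniz computation for the cubic form. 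For the $L^\infty_tH^1_x$ component I would use that $\mathcal{D}+V$ is self-adjoint on $L^2$ and, under \eqref{Vhp}, defines a norm equivalent to the full $H^1$ norm (via Hardy's inequality applied to the $V$ contribution), so that the Duhamel integral is controlled in $L^\infty_tH^1_x$ by the same $L^1_tH^1_x$ norm of $P_3(v)$.

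The analogous argument applied to $\Phi(v_1)-\Phi(v_2)$ produces the contraction estimate, and for $\|f\|_{H^1}$ sufficiently small the Banach fixed point theorem yields a unique solution in the small ball, which is the desired global solution. The main obstacle in the scheme is the preservation of the structural class $\mathcal{\dot H}^1$ along the iteration: without the partial wave decomposition, neither the perturbed propagator nor the cubic term would map $\mathcal{\dot H}^1$ into itself, and Theorem \ref{teo1} could not be invoked at each step. Proposition \ref{dir+v} is precisely the tool that resolves this, which is why restricting the initial datum to the sector $\mathcal{H}_{m_{1/2},k_{1/2}}$ is essential.
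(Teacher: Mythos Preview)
Your proposal is correct and matches the paper's approach exactly: the paper does not give a detailed proof of this theorem, stating only that once the invariance of the partial wave sector under both $\mathcal{D}+V$ and $P_3$ is established (Proposition~\ref{dir+v}), the result ``can be proved with completely standard techniques,'' namely the contraction in $L^\infty_tH^1_x\cap L^2_tL^\infty_x$ already sketched in the introduction. You have filled in precisely those standard details, and you correctly identify that the crux is keeping iterates inside $\mathcal{\dot H}^1$ so that Theorem~\ref{teo1} remains applicable at each step.
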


\section{Angular regularity estimates}

In recent years many papers (see e.g. \cite{sterb}, \cite{FangWang08-a}, \cite{JiangWangYu10-a}, \cite{MachiharaNakamuraNakanishi05-a} and references therein) have been devoted to the improvement of Strichartz estimates using angular regularity. In particular in \cite{MachiharaNakamuraNakanishi05-a} it is proved the following estimate for the wave propagator
\begin{equation}\label{eq:mnnop}
  n=3,\qquad
  \|e^{it |D|}f\|
      _{L^{2}_tL^{\infty}_{r}L^{p}_{\omega}}\lesssim
  \sqrt{p}\cdot
  \||D|f\|_{L^{2}},\qquad \forall p<\infty
\end{equation}
where we are using the natural notation
\begin{equation*}
  \|f\|_{L^{a}_{r}L^{b}_{\omega}}=
  \left(
    \int_{0}^{\infty}\|f(r\ \cdot\ )\|_{L^{b}(\mathbb{S}^{n-1})}
    ^{a}r^{n-1}dr
  \right)^{\frac1a}
\end{equation*}
and
\begin{equation*}
  \|f\|_{L^{\infty}_{r}L^{b}_{\omega}}=
  \sup_{r\ge0}
    \|f(r\ \cdot\ )\|_{L^{b}(\mathbb{S}^{n-1})}.
\end{equation*}
Notice that the norm at the left hand side distinguishes between the
integrability in the radial and tangential
directions. Using estimate \eqref{eq:mnnop}, Machihara et al. were able
to prove global well posedness for problem \eqref{pb} with $V=0$ for small $\dot{H}^1$-norm data with slight additional angular regularity,
and in particular for all \emph{radial} $\dot H^{1}$ data. This
is especially interesting since, as pointed out in the introduction, radial data
do not correspond to radial solution for the Dirac equation (due to the fact that the operator
$\mathcal{D}$ does not commute with rotations of $\mathbb{R}^{3}$).

Estimate \eqref{eq:mnnop} gives a bound for the standard $L^{2}L^{\infty}$ norm
via Sobolev embedding on the unit sphere $\mathbb{S}^{2}$
\begin{equation}\label{eq:refined}
  \|e^{it |D|}f\|_{L^{2}L^{\infty}}\lesssim
  \|\Lambda^{\epsilon}_{\omega} e^{it |D|}f\|
      _{L^{2}L^{\infty}_{r}L^{p}_{\omega}}
  \lesssim
  \||D|\Lambda^{\epsilon}_{\omega}f\|_{L^{2}}, 
  \qquad
  p>\frac{2}{\epsilon}
\end{equation}
where the angular derivative operator $\Lambda_{\omega}^{s}$
is defined in terms of the Laplace-Beltrami operator on 
$\mathbb{S}^{n-1}$ as
$\Lambda^{s}_{\omega}=(1-\Delta_{\mathbb{S}^{n-1}})^{s/2}$.

Our main goal here is to extend this group of results to the Dirac
equation perturbed with a small potential
$V(x)$. We consider first the linear equation
\begin{equation}\label{eq:linearV}
  iu_{t}=\mathcal{D}u+V(x)u+F(t,x).
\end{equation}
The perturbative term $Vu$ can not be handled using the
inhomogeneous version of \eqref{eq:mnnop} because of the
loss of derivatives. Instead, we shall need to prove new mixed 
Strichartz-smoothing estimates.

We collect in the following Theorem the Strichartz estimates we are able to prove both for the wave and Dirac equations 
(see \cite{Cacciafesta3}).

\begin{theorem}\label{the:strichom}
Let $n\geq 3$ and 
\begin{equation}\label{sig}
\sigma_n=\begin{cases}
0 \quad\text{if $n=3$}\\
  1-\frac n2 \quad\text{if $n\geq4$}.
    \end{cases}
    \end{equation}
    Then for every $s\geq0$ the following estimates for the free wave equation hold
  \begin{equation}\label{eq:strichartz1}
    \| \Lambda^s_\omega e^{it|D|}f\|_{L^2_tL^\infty_{r}L^2_\omega}
    \lesssim
    \|\Lambda^{s+\sigma_n}_{\omega} f\|_{\dot H^\frac{n-1}{2}};
  \end{equation}
    \begin{equation}\label{non1}
    \left\|\Lambda^s_\omega
      \int_0^t e^{i(t-s)|D|}F(s,x)ds
    \right\|_{L^2_tL^\infty_rL^2_\omega}
    \lesssim
    \|
      \bra{x}^{\frac{1}{2}+}|D|^{\frac{n-1}{2}}\Lambda_{\omega}^{s+\sigma_n}F
    \|_{L^2_tL^2_{x}}.
  \end{equation}
 The corresponding estimates for the 3D free Dirac equation are
\begin{equation}\label{eq:freedirac}
    \| \Lambda_{\omega}^{s} 
         e^{it \mathcal{D}}f\|_{L^2_tL^\infty_{r}L^2_\omega}
    \lesssim
    \|\Lambda_{\omega}^{s} f\|_{\dot H^1},
  \end{equation}
 
  \begin{equation}\label{non2}
    \left\|\Lambda_{\omega}^{s} 
      \int_0^t e^{i(t-t')\mathcal{D}}F(t',x)dt'
    \right\|_{L^2_tL^\infty_rL^2_\omega}
    \lesssim
    \|
      \bra{x}^{\frac{1}{2}+}|D|\Lambda_{\omega}^{s} F
    \|_{L^2_tL^2_{x}}.
  \end{equation}

\end{theorem}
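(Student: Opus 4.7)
The plan is to reduce the four estimates to one-dimensional estimates via spherical harmonic decomposition, and then to transfer the wave estimates to the Dirac flow using the representation formula \eqref{dirsol}.

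\emph{Step 1 (Spherical harmonic decomposition).} I would write $f(r\omega)=\sum_k f_k(r)Y_k(\omega)$, where $\{Y_k\}$ is an orthonormal basis of $L^2(\mathbb{S}^{n-1})$ formed by eigenfunctions of $-\Delta_{\mathbb{S}^{n-1}}$ with eigenvalues $\lambda_k$. Since $|D|$ commutes with the Laplace--Beltrami operator, the free wave propagator respects this decomposition, and Plancherel in $\omega$ gives
\begin{equation*}
\|\Lambda_\omega^s e^{it|D|} f\|_{L^2_tL^\infty_r L^2_\omega}^2
 = \sum_k (1+\lambda_k)^s\,\|e^{it|D|}(f_k Y_k)\|_{L^2_tL^\infty_r}^2,
\end{equation*}
so the problem reduces to bounding each spherical harmonic mode in the mixed radial norm $L^2_tL^\infty_r$, with explicit tracking of the dependence on $\lambda_k$.

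\emph{Step 2 (One-dimensional mode-by-mode estimate).} For each $k$, the radial propagation of $f_kY_k$ is represented through spherical Bessel functions of order $k+(n-2)/2$. Adapting the classical argument recalled in Section 2 — where $\sin(t|D|)/|D|$ acting on a radial function was written as a one-dimensional integral and dominated by the Hardy--Littlewood maximal function of $sf(s)$ — and combining it with sharp stationary-phase/interpolation bounds on Bessel kernels (as in \cite{sterb}, \cite{MachiharaNakamuraNakanishi05-a}), I would establish a mode estimate of the form
\begin{equation*}
\|e^{it|D|}(f_k Y_k)\|_{L^2_tL^\infty_r}^2 \lesssim (1+\lambda_k)^{\sigma_n}\, \||D|^{(n-1)/2}(f_k Y_k)\|_{L^2}^2.
\end{equation*}
The factor $(1+\lambda_k)^{\sigma_n}$ encodes the angular regularity gain/loss coming from Bessel function asymptotics: $\sigma_3=0$ recovers precisely the sharp $3$D case with no loss, while $\sigma_n<0$ for $n\geq 4$ produces a genuine gain. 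Re-summing and using the spectral identification $\Lambda_\omega^{s+\sigma_n}\leftrightarrow (1+\lambda_k)^{(s+\sigma_n)/2}$ yields \eqref{eq:strichartz1}.

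\emph{Step 3 (Inhomogeneous mixed Strichartz--smoothing bound).} To derive \eqref{non1} from \eqref{eq:strichartz1}, I would run a $TT^\ast$-type duality argument combined with a weighted Kato-type smoothing estimate
\begin{equation*}
\|\bra{x}^{-1/2-}|D|^{(n-1)/2}\Lambda_\omega^s e^{it|D|}f\|_{L^2_tL^2_x}\lesssim \|\Lambda_\omega^{s+\sigma_n} f\|_{L^2},
\end{equation*}
with the Duhamel time cutoff $\int_0^t$ removed by the Christ--Kiselev lemma. Pairing one factor of the Strichartz bound with one factor of the smoothing bound produces exactly the $\bra{x}^{1/2+}$ weight on the right-hand side of \eqref{non1}.

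\emph{Step 4 (Transfer to the Dirac flow).} For the Dirac estimates \eqref{eq:freedirac} and \eqref{non2} in $n=3$ I would use \eqref{dirsol},
\begin{equation*}
e^{it\mathcal{D}}f=\cos(t|D|)f+i\,\frac{\sin(t|D|)}{|D|}\mathcal{D}f.
\end{equation*}
Both $\cos(t|D|)$ and $\sin(t|D|)/|D|$ commute with $\Lambda_\omega$, and $\mathcal{D}$ contributes exactly one full derivative with $\|\mathcal{D}f\|_{L^2}=\|f\|_{\dot H^1}$. Hence \eqref{eq:freedirac} and \eqref{non2} follow by applying \eqref{eq:strichartz1} and \eqref{non1} with $n=3$, $\sigma_3=0$ to both $f$ and $\mathcal{D}f$; the matrix structure of $\mathcal{D}$ does not interfere because each component of $e^{it\mathcal{D}}f$ is a scalar wave-type evolution of a component of $f$ or $\mathcal{D}f$.

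\emph{Main obstacle.} The delicate point is Step 2: extracting the sharp $\sigma_n$ dependence on the spherical harmonic index $k$ from the one-dimensional radial estimates. Obtaining $\sigma_3=0$ in particular requires pushing the Bessel-function/maximal-function analysis to the endpoint, and it is here — rather than in the otherwise routine $TT^\ast$ and Christ--Kiselev manipulations of Step 3 — that the proof genuinely departs from the standard Keel--Tao framework and from the simpler radial argument of Proposition \ref{prop1}.
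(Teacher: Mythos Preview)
The paper does not give a proof of Theorem \ref{the:strichom} here; it is stated with a reference to \cite{Cacciafesta3}. However, the surrounding discussion (in particular the remark that the proof of Propositions \ref{prop1}--\ref{prop2} ``mainly relying on Fourier transform in radial coordinates, takes the remarkable advantage of being adaptable also to handle the non homogeneous term'') makes clear that the inhomogeneous estimates \eqref{non1}--\eqref{non2} are obtained by a \emph{direct} radial-Fourier/Bessel argument, not by deducing them from \eqref{eq:strichartz1} via $TT^{*}$ and Christ--Kiselev.

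Your Step 3 contains a genuine gap at exactly this point. The Christ--Kiselev lemma requires a strict inequality between the time Lebesgue exponents on the two sides; here both the target norm $L^{2}_{t}L^{\infty}_{r}L^{2}_{\omega}$ and the source norm $L^{2}_{t}L^{2}_{x}$ (with weight) carry $L^{2}$ in time, so the truncation $\int_{0}^{t}$ cannot be removed this way. This $L^{2}$--$L^{2}$ endpoint obstruction is precisely why a direct proof of the Duhamel estimate, in the spirit of Proposition \ref{prop2}, is needed. In addition, the smoothing estimate you invoke is misstated: Kato smoothing for $e^{it|D|}$ gains only $|D|^{1/2}$, not $|D|^{(n-1)/2}$, so the displayed bound in Step 3 is false already for $n=3$. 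Finally, in Step 4 you should not assert that the matrix structure of $\mathcal{D}$ ``does not interfere'': the paper itself stresses that $\mathcal{D}$ does not commute with rotations, hence $\mathcal{D}$ does not commute with $\Lambda_{\omega}$. One still has $\|\Lambda_{\omega}^{s}\mathcal{D}f\|_{L^{2}}\lesssim\|\Lambda_{\omega}^{s}f\|_{\dot H^{1}}$ because $\partial_{k}$ shifts the spherical-harmonic degree by at most one, but this requires an argument rather than a commutation claim. Your identification of Step 2 as the ``main obstacle'' is thus misplaced: the homogeneous mode-by-mode estimate is indeed delicate, but the structural failure of your scheme lies in Step 3.
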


\begin{remark}\label{rem:comparehomog}
  As a byproduct of our proof we have thus obtained
  the following endpoint estimates for the wave flow
  with gain of angular regularity:
  \begin{equation}\label{eq:ourwave}
    n\ge3,\qquad
    \| e^{it|D|}f\|_{L^2_tL^\infty_{r}L^2_\omega}
    \lesssim
    \|\Lambda^{\sigma_n}_{\omega} f\|_{\dot H^\frac{n-1}{2}}
  \end{equation}
  where $\sigma_n$ is as in \eqref{sig}.
  Although this was not the main purpose here,
  it is interesting to compare
  \eqref{eq:ourwave} with known results. In dimension $n=3$,
  estimate \eqref{eq:ourwave} is just a special case of 
  Theorem 1.1-III in \cite{MachiharaNakamuraNakanishi05-a} 
  where \eqref{eq:ourwave} is proved with
  $\sigma_3=-\frac34$; it is not known if this value
  is sharp, however in the same paper it is proved that
  the estimate is false for $\sigma_3<-\frac56$.
  On the other hand, to our knowledge,
  estimate \eqref{eq:ourwave} for $n\ge4$ and \eqref{non1}
  for $n\ge3$ are new. The literature on these kind
  of estimates is extensive and we refer to
  \cite{FangWang08-a}, 
  and the references therein for further information.
\end{remark}

The next step is to prove suitable smoothing estimates
for the Dirac equation with potential. By a perturbative argument we
obtain the following endpoint estimates for the linear flows:

\begin{theorem}\label{the:strichD-i}
  Assume that the hermitian matrix $V(x)$ satisfies,
  for $\delta$ sufficiently small, $C$ arbitrary and $\sigma>1$, with
  $v(x)=|x|^{\frac12}|\log|x||^{\frac12+}+|x|^{1+}$,
  \begin{equation}\label{eq:assnablaV2-i}
    |V(x)|\leq\frac{\delta}
         {v(x)},\qquad
    |\nabla V(x)|\leq\frac{C}
         {v(x)}.    
  \end{equation}
  Then the perturbed Dirac flow
  satisfies the endpoint Strichartz estimate
  \begin{equation}\label{eq:enddiracV-i}
    \|e^{it(\mathcal{D}+V)}f\|_{L^2_t L^\infty_{r}L^2_\omega}
    \lesssim
    \| f\|_{H^1}.
  \end{equation}
  If the potential satisfies the stronger assumptions: 
  for some $s>1$,
  \begin{equation}\label{eq:nablaangV2-i}
    \|\Lambda_{\omega}^{s}
         V(|x|\ \cdot\ )\|_{L^{2}(\mathbb{S}^{2})}
    \le \frac{\delta}{v(x)},
    \qquad
    \|\Lambda_{\omega}^{s}\nabla
         V(|x|\ \cdot\ )\|_{L^{2}(\mathbb{S}^{2})}
    \le \frac{C}{v(x)},
  \end{equation}
  then we have the endpoint estimate with angular regularity
  \begin{equation}\label{eq:enddiracVang-i}
    \|\Lambda^{s}_{\omega}
      e^{it(\mathcal{D}+V)}f\|_{L^2_t L^\infty_{r}L^2_\omega}
    \lesssim
    \|\Lambda^{s}_{\omega} f\|_{H^1}
  \end{equation}
  and the energy estimate with angular regularity
  \begin{equation}\label{eq:energyang-i}
    \|\Lambda^{s}_{\omega}
      e^{it(\mathcal{D}+V)}f\|_{L^\infty_t H^{1}}
    \lesssim
    \|\Lambda^{s}_{\omega} f\|_{H^1}
  \end{equation}
\end{theorem}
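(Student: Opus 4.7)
The plan is to exactly parallel the proof of Theorem \ref{teo1}, replacing the radial endpoint estimates (Propositions \ref{prop1}--\ref{prop2}) by the angular-regularity endpoint estimates \eqref{eq:freedirac}--\eqref{non2} of Theorem \ref{the:strichom}, and closing the argument by weighted smoothing inequalities for the perturbed Dirac flow. All three estimates \eqref{eq:enddiracV-i}, \eqref{eq:enddiracVang-i} and \eqref{eq:energyang-i} are obtained by perturbative arguments against the free Dirac flow.

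For \eqref{eq:enddiracV-i}, set $u=e^{it(\mathcal{D}+V)}f$ and apply Duhamel's formula
$$u=e^{it\mathcal{D}}f-i\int_0^t e^{i(t-s)\mathcal{D}}(Vu)\,ds.$$
Taking $\|\cdot\|_{L^2_tL^\infty_rL^2_\omega}$ and using \eqref{eq:freedirac} and \eqref{non2} at $s=0$ one obtains
$$\|u\|_{L^2_tL^\infty_rL^2_\omega}\lesssim\|f\|_{\dot H^1}+\|\langle x\rangle^{1/2+}|D|(Vu)\|_{L^2_tL^2_x}.$$
The $L^2$-boundedness of the Riesz transform $|D|^{-1}\nabla$ allows the replacement of $|D|(Vu)$ by $(\nabla V)u+V\nabla u$; inserting the pointwise hypotheses \eqref{eq:assnablaV2-i} the two terms are then controlled via weighted smoothing inequalities of the form
$$\|W\,e^{it(\mathcal{D}+V)}f\|_{L^2_{t,x}}\lesssim\|f\|_{L^2},\qquad \|W\,\nabla e^{it(\mathcal{D}+V)}f\|_{L^2_{t,x}}\lesssim\|f\|_{\dot H^1},$$
with $W(x)=\langle x\rangle^{1/2+}/v(x)$. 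The weight $v$ in \eqref{eq:assnablaV2-i} is designed precisely so that $W$ is an admissible Morawetz-type weight for $\mathcal{D}+V$: it behaves like $|x|^{-1/2}|\log|x||^{-1/2-}$ near the origin and decays faster than $|x|^{-1/2}$ at infinity, so that for $\delta$ small the required smoothing inequalities follow by the perturbative scheme of D'Ancona--Fanelli applied in the spirit of the proof of Theorem \ref{teo1}.

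For the angular refinement \eqref{eq:enddiracVang-i}, I apply $\Lambda^s_\omega$ to Duhamel's identity and use \eqref{eq:freedirac} and \eqref{non2} in full strength (with the angular derivative on the right-hand side), reducing matters to estimating $\|\langle x\rangle^{1/2+}|D|\Lambda^s_\omega(Vu)\|_{L^2_{t,x}}$. Since $s>1$ the Sobolev embedding $H^s(\mathbb{S}^2)\hookrightarrow L^\infty(\mathbb{S}^2)$ is available, and the spherical fractional Leibniz rule on each sphere $\{|x|=r\}$ yields
$$\|\Lambda^s_\omega(Vu)(r\,\cdot\,)\|_{L^2_\omega}\lesssim\|\Lambda^s_\omega V(r\,\cdot\,)\|_{L^2_\omega}\,\|\Lambda^s_\omega u(r\,\cdot\,)\|_{L^2_\omega},$$
and an analogous product bound after the Riesz-transform splitting. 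The hypothesis \eqref{eq:nablaangV2-i} now plays the role of the pointwise bounds of the previous step, and the argument closes by the angularly weighted smoothing inequality
$$\|W\,\nabla\Lambda^s_\omega e^{it(\mathcal{D}+V)}f\|_{L^2_{t,x}}\lesssim\|\Lambda^s_\omega f\|_{\dot H^1},$$
which follows from the non-angular smoothing estimate using that $\Lambda^s_\omega$ commutes with every radial Fourier multiplier.

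The energy estimate \eqref{eq:energyang-i} follows from two observations: the $L^2$-conservation of $e^{it(\mathcal{D}+V)}$ (from selfadjointness) immediately yields the $L^2$ part of the $H^1$ norm, while the $\dot H^1$ part is handled by applying $\nabla\Lambda^s_\omega$ to Duhamel's formula and invoking the gradient smoothing estimate already used above. \emph{The main technical obstacle} lies in the angular-regularity step: the Dirac operator does not commute with orbital angular momentum, so pushing $\Lambda^s_\omega$ through $\mathcal{D}$ and through the matrix-valued $V$ creates spin-type commutator terms that must be absorbed by the Leibniz rule on $\mathbb{S}^2$ combined with the angular $L^2$-bounds \eqref{eq:nablaangV2-i}; the threshold $s>1$ is precisely what guarantees the Sobolev embedding needed to control the mixed $L^\infty_\omega$-terms without any additional derivative loss.
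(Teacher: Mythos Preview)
Your overall strategy is exactly the one the paper intends: the text states only that the result follows ``by a perturbative argument'', and the template for that argument is the proof of Theorem~\ref{teo1}, with Propositions~\ref{prop1}--\ref{prop2} replaced by \eqref{eq:freedirac}--\eqref{non2}. Your treatment of \eqref{eq:enddiracV-i} via Duhamel, the Riesz-transform splitting, and the D'Ancona--Fanelli smoothing estimates reproduces that proof verbatim, and your handling of the product $\Lambda^{s}_{\omega}(Vu)$ through the algebra property of $H^{s}(\mathbb{S}^{2})$ for $s>1$ is precisely what Remark~\ref{ss} points to.

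There is, however, one genuine gap. You claim that the angular smoothing estimate
\[
  \|W\,\nabla\Lambda^{s}_{\omega}e^{it(\mathcal{D}+V)}f\|_{L^{2}_{t,x}}
  \lesssim \|\Lambda^{s}_{\omega}f\|_{\dot H^{1}}
\]
``follows from the non-angular smoothing estimate using that $\Lambda^{s}_{\omega}$ commutes with every radial Fourier multiplier''. This is not a valid justification: the perturbed propagator $e^{it(\mathcal{D}+V)}$ is \emph{not} a radial Fourier multiplier (neither $\mathcal{D}$ nor $V$ commutes with $\Lambda^{s}_{\omega}$), so commutation alone cannot transfer the non-angular smoothing to the angular one. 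The commutation argument is legitimate only for the \emph{free} flow pieces $\cos(t|D|)$, $\sin(t|D|)/|D|$; for the full perturbed flow the angular smoothing estimate must itself be obtained perturbatively. Concretely, one writes Duhamel for $u$, applies the weighted norm $\|W\Lambda^{s}_{\omega}\nabla(\cdot)\|_{L^{2}_{t,x}}$, uses the free angular smoothing (which \emph{does} follow from commutation with $|D|$) on the homogeneous part, and on the Duhamel term uses the dual smoothing bound together with the spherical Leibniz rule and \eqref{eq:nablaangV2-i}; the smallness $\delta$ then lets you absorb the $\|W\Lambda^{s}_{\omega}\nabla u\|_{L^{2}_{t,x}}$ term appearing on the right. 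In other words, the Strichartz bound \eqref{eq:enddiracVang-i} and the angular smoothing bound must be closed \emph{together} in a single bootstrap, not sequentially as your write-up suggests. The same remark applies to the $\dot H^{1}$ part of \eqref{eq:energyang-i}. Once this is set up correctly the argument goes through; the idea is right, but the circularity in that step should be removed.
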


\begin{remark}\label{vg}
Notice the difference in the assumptions on $V$ between Theorems \ref{teo1} and \ref{the:strichD-i}: the hypothesis on structure \eqref{pppot} here is substituted by \eqref{eq:nablaangV2-i} which ensures some minimal angular regularity needed.
\end{remark}

We can finally apply Theorem \ref{the:strichD-i} to the
nonlinear equation \eqref{pb} to obtain:

\begin{theorem}\label{the:globalNL-i}
  Consider the perturbed Dirac system \eqref{pb}, where
  the $4\times4$ matrix valued potential $V(x)$
  is hermitian and satisfies assumptions
  \eqref{eq:nablaangV2-i}.
  Let $P_{3}(u,\overline{u})$ be a $\mathbb{C}^{4}$-valued 
  homogeneous cubic polynomial.
  Then for any $s>1$ there exists $\epsilon_{0}$ such that
  for all initial data satisfying
  \begin{equation}\label{eq:data}
    \|\Lambda_{\omega}^{s}f\|_{H^{1}}<\epsilon_{0}
  \end{equation}
  the Cauchy problem \eqref{pb} admits a unique global solution
  $u\in CH^{1}\cap L^{2}L^{\infty}$
  with $\Lambda^{s}_{\omega}u\in L^{\infty}H^{1}$.
\end{theorem}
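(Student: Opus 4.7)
The plan is a standard fixed-point argument, with the linear estimates of Theorem \ref{the:strichD-i} used as a black box and the crucial algebraic input being a fractional Leibniz rule on $\mathbb{S}^{2}$ for $\Lambda_{\omega}^{s}$. For $s>1$ I would work in the Banach space $X$ of functions $u\colon\mathbb{R}\times\mathbb{R}^{3}\to\mathbb{C}^{4}$ with norm
\[
\|u\|_{X}=\|u\|_{L^{\infty}_{t}H^{1}_{x}}+\|\Lambda_{\omega}^{s}u\|_{L^{\infty}_{t}H^{1}_{x}}+\|\Lambda_{\omega}^{s}u\|_{L^{2}_{t}L^{\infty}_{r}L^{2}_{\omega}}.
\]
Since $H^{s}(\mathbb{S}^{2})\hookrightarrow L^{\infty}(\mathbb{S}^{2})$ for $s>1$, this norm automatically controls $\|u\|_{L^{2}_{t}L^{\infty}_{x}}$; in other words, the angular loss imposed on the initial datum is precisely what is needed to recover the missing endpoint \eqref{strend}.

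Next I would set up the Duhamel map
\[
\Phi(v)(t)=e^{it(\mathcal{D}+V)}f-i\int_{0}^{t}e^{i(t-s)(\mathcal{D}+V)}P_{3}(v,\overline{v})(s)\,ds
\]
and bound $\|\Phi(v)\|_{X}$ by combining the endpoint Strichartz \eqref{eq:enddiracVang-i} and the energy estimate \eqref{eq:energyang-i} with a Minkowski argument in the source time variable (note that $\Lambda_{\omega}^{s}$ commutes with the time integration, so it can be pulled inside before one applies the linear bound to each slice). This yields
\[
\|\Phi(v)\|_{X}\lesssim\|\Lambda_{\omega}^{s}f\|_{H^{1}}+\|P_{3}(v)\|_{L^{1}_{t}H^{1}_{x}}+\|\Lambda_{\omega}^{s}P_{3}(v)\|_{L^{1}_{t}H^{1}_{x}},
\]
and an entirely analogous bound for the difference $\Phi(v)-\Phi(w)$ reduces the whole problem to a single trilinear estimate.

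That trilinear estimate splits into two pieces. The plain part is standard: H\"older in time and the usual Leibniz rule give
\[
\|P_{3}(v)\|_{L^{1}H^{1}}\lesssim\|v\|_{L^{\infty}H^{1}}\|v\|_{L^{2}L^{\infty}}^{2}\lesssim\|v\|_{X}^{3},
\]
the last step being the Sobolev embedding on $\mathbb{S}^{2}$. For the angular piece, $\Lambda_{\omega}^{s}$ must be distributed across the cubic by a fractional product rule on the sphere; one keeps $\Lambda_{\omega}^{s}$ (and possibly $\nabla$) on a single factor measured in $L^{\infty}_{t}H^{1}_{x}$ or in $L^{2}_{t}L^{\infty}_{r}L^{2}_{\omega}$, and places the other two factors in $L^{2}_{t}L^{\infty}_{x}$. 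All resulting terms are then $\lesssim\|v\|_{X}^{3}$, and smallness of $\|\Lambda_{\omega}^{s}f\|_{H^{1}}$ makes $\Phi$ a contraction on a small closed ball of $X$, yielding the required unique global solution.

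The main obstacle is precisely this angular piece of the trilinear estimate: one must establish, or cite, a fractional Leibniz rule for $\Lambda_{\omega}^{s}$ on $\mathbb{S}^{2}$ at the level $s>1$ that is compatible with the mixed spherical norm $L^{\infty}_{r}L^{2}_{\omega}$, and then check that every distribution of $\nabla$ and $\Lambda_{\omega}^{s}$ among the three matrix-valued factors closes up against the single norm $\|\cdot\|_{X}$. A secondary, algebraic, complication is that $\mathcal{D}$ does not commute with rotations and $\Lambda_{\omega}^{s}$ therefore does not pass through the perturbed propagator in a trivial way; however, that difficulty is already absorbed into \eqref{eq:enddiracVang-i}--\eqref{eq:energyang-i}, which I invoke here as black boxes.
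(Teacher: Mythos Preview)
Your contraction-mapping scheme is exactly what the paper has in mind: Duhamel for the perturbed flow, then the linear estimates \eqref{eq:enddiracVang-i}--\eqref{eq:energyang-i} of Theorem~\ref{the:strichD-i} as black boxes, closing in a norm equivalent to your $\|\cdot\|_{X}$. So the architecture is correct.

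The one place where you diverge from the paper is the trilinear step, and there you are making life harder than necessary. You flag as the ``main obstacle'' a fractional Leibniz rule for $\Lambda_{\omega}^{s}$ on $\mathbb{S}^{2}$, distributing the angular derivative over the three factors. But the whole point of the hypothesis $s>1$ in the statement (as opposed to the $s>0$ of \cite{MachiharaNakamuraNakanishi05-a}) is precisely to \emph{avoid} that Moser-type estimate: for $s>1=\frac{n-1}{2}$ the space $H^{s}(\mathbb{S}^{2})$ is an algebra, so at each fixed radius $r$ one has simply
\[
\|\Lambda_{\omega}^{s}(v_{1}v_{2}v_{3})(r\,\cdot\,)\|_{L^{2}_{\omega}}
\lesssim
\prod_{j=1}^{3}\|\Lambda_{\omega}^{s}v_{j}(r\,\cdot\,)\|_{L^{2}_{\omega}},
\]
and then H\"older in $r$ and $t$ against the norms already present in $\|\cdot\|_{X}$ closes the estimate with no further tools. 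This is said explicitly in Remark~\ref{ss}. In other words, your ``main obstacle'' evaporates once you use the algebra property instead of a Leibniz rule; the fractional product estimate you describe is what one would need to push the result down to $s>0$, which the paper deliberately does not attempt.
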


In particular, problem \eqref{pb} 
has a global unique solution for all radial
data with sufficiently small $H^{1}$ norm.

\begin{remark}\label{rem:wave}
  It is clear that our methods can also be applied to
  nonlinear wave equations perturbed with potentials, and
  allow to prove global well posedness for some types of
  critical nonlinearities.\end{remark}

\begin{remark}\label{rem:generality}
  We did not strive for the sharpest condition on the
  potential $V$, which can be improved
  at the price of additional technicalities which
  we prefer to skip here.  
  Moreover, differently from the previous chapter in which the structure of the non linear term was essential, the result can be extended
  to more general cubic nonlinearities $|P_3(u)|\sim |u|^{3}$.
  \end{remark}
  
  \begin{remark}\label{ss}
  Notice that we need an angular regularity
  $s>1$ on the data, higher than the $s>0$ assumed
  in  the result of \cite{MachiharaNakamuraNakanishi05-a}. 
  It is possible to relax our assumptions to $s>0$;
  the only additional tool we would need to prove is
  a Moser-type product estimate
  \begin{equation*}
    \|\Lambda^{s}_{\omega}(uv)\|_{L^{2}_{\omega}}\lesssim
    \|u\|_{L^{\infty}_{\omega}}
    \|\Lambda^{s}_{\omega}v\|_{L^{2}_{\omega}}
    +
    \|\Lambda^{s}_{\omega}u\|_{L^{2}_{\omega}}
    \|v\|_{L^{\infty}_{\omega}},\qquad
    s>0
  \end{equation*}
  and an analogous one for $\Lambda^{s}_{\omega}|D|(uv)$.
  This would require a fair amount of calculus on the sphere
  $\mathbb{S}^{2}$, and here we preferred to use the
  conceptually
  much simpler algebra property of $H^{s}(\mathbb{S}^{n-1})$
  for $s>\frac{n-1}{2}$.
  \end{remark}

\end{document}